\newtheorem{theorem}{Theorem}[section]
\newtheorem{remark}{Remark}[section]
\numberwithin{equation}{section}
\newcommand{\mr}{\mathbb{R}}
\begin{document}
\title[Exact solutions for nonlinear trapped lee waves]
{Exact solutions for nonlinear trapped lee waves in the $\beta$-plane approximation}%
\author[Lili Fan$^{\dag}$]{Lili Fan$^{\dag}$}%
\address[Lili Fan]{College of Mathematics and Information Science,
Henan Normal University, Xinxiang 453007, China}
\email{fanlily89@126.com\, (Corresponding author)}
\author[Ruonan Liu]{Ruonan Liu}
\address[Ruonan Liu]{College of Mathematics and Information Science,
Henan Normal University, Xinxiang 453007, China}
\email{liuruonan97v@163.com}
\author[Heyang Li]{Heyang Li}
\address[Heyang Li]{College of Mathematics and Information Science,
Henan Normal University, Xinxiang 453007, China}
\email{liheyang0619@163.com}


\begin{abstract}In this paper, we construct exact solutions that character three-dimensional, nonlinear trapped lee waves propagation superimposed on longitudinal atmospheric currents in the $\beta$-plane approximation. The solutions obtained are presented in Lagrangian coordinates, and are Gerstner-like solutions. In the process, we also derive the dispersion relation and analyze the density, pressure and the vorticity qualitatively.
\end{abstract}

\date{}

\maketitle

\noindent {\sl Keywords\/}: Mountain waves, Trapped lee waves, Exact solution, $\beta$-plane approximation


\noindent {\sl AMS Subject Classification} (2010): 86A10; 35Q86; 76U60. \\

\section{Introduction}
\large
When stably stratified air ascends a mountain barrier, it produces oscillations, which can often trigger disturbances that propagate away from the mountain as gravity waves. Gravity waves triggered by the flow over a mountain are referred to as mountain waves or lee waves. The glossary of meteorology recognises two divisions of mountain waves: vertically propagating waves and trapped lee waves and the later ones are what we considered in this paper. Trapped lee waves have horizontal wavelengths of 5-35 km. They are trapped in a layer with high static stability and moderate wind speeds, usually in the lower troposphere beneath a thermal inversion and/or if stronger winds blow in the middle and upper troposphere \cite{Vo}. As the wave energy is trapped within the stable layer, these waves can propagate for long distances downwind of the mountain crest.

The problems for mountain waves have been addressed on numerical studies in recent papers \cite{Hi,Ji,Kr,Sh} and qualitative analysis of mountain waves can be found in \cite{Du,Te}, where linear theories are mostly used. While in view of the importance of nonlinear effects in large-amplitude flows, where mountain waves are usually of this type, it is more suitable to use the nonlinear governing equations (the Euler equation, the equation of mass conservation, the equation of state for the air and the first law of thermodynamics) to investigate the mountain waves, which have been derived systematically by Constantin in \cite{Co1}. This paper takes into account a further physical factor into the system, namely Coriolis forces introduced by the Earth's rotation, which bring additional terms to the Euler equation. The aim of this paper is to pursue a theoretical investigation of the nonlinear governing equations with full Coriolis forces for the mountain waves.

Due to the complexity and intractability of the governing equations, some simpler approximate models have been proposed to mitigate the Coriolis terms in the Euler equation, among which are the so-called $\beta$-plane and $f$-plane approximate models. In the $f$-plane approximation, the constant Coriolis forces parameters are considered and the latitude change is ignored. While in the $\beta$-plane approximation, the linear change of Coriolis forces parameters with latitude is introduced to allow for the variation of the Coriolis forces from point to point. The $f$-plane and $\beta$-plane approximate models have been applied widely in studying geophysical fluid dynamics. The readers can refer to \cite{CDY,Co7,Fa1,HM,Io1,LFW,M1} for geophysical water waves in the $f$-plane approximation and \cite{M2} for geophysical water waves in the $\beta$-plane, and so on. As for the atmospheric flows in the presence of Coriolis forces, the readers can refer to \cite{Ho,O} about the vertically propagating planetary waves and about the anelastic equatorial waves in the $\beta$-plane approximation, as well as \cite{Xu,YB} about gravity waves in the middle atmosphere and about large-scale tropical atmospheric dynamics in the $f$-plane approximation. In this study, the $\beta$-plane approximation is employed.

To describe the nonlinear dynamic of the given complex flows in detail, it is remarkable to find an exact solution to the mountain wave problem. While, it is known that finding explicit exact solutions is extremely difficult for the reason of the high intractability of the governing equations. As a matter of fact, the only known two-dimensional explicit exact solution to the governing equations was presented by Gerstner in $1802$ for periodic travelling gravity water waves with a non-flat free surface \cite{Ge}. Significant extensions of this solution to include the effects of Coriolis forces were given by Pollard in \cite{Po} and by Constantin in \cite{Co4}, where they constructed the nonlinear, three-dimensional solutions to the governing equations for geophysical ocean flows. In the past few decades, varieties of Gerstner-like and Pollard-like solutions were derived and analyzed to model a number of different physical and geophysical homogeneous inviscid fluid flows. For Gerstner-like solutions in the $\beta$-plane approximation, one can refer to \cite{Co4,Co6,Co3} for the equatorial trapped waves, \cite{H1} for the flows with underlying current, \cite{Io} about the geophysical edge waves, \cite{CIY,H2,H3} about the geophysical waves with centripetal forces and \cite{H4} for geophysical waves with a gravitational-correction term. For Gerstner-like solutions in the $f$-plane approximation, one can refer to \cite{H5,H6} for equatorial waves interacting with undercurrent, \cite{Fa} for geophysical trapped waves, \cite{Ma} for the  equatorial edge waves, and \cite{Co22} for wind-drift arctic flows outside the Amundsen
Basin. Recent studies about the Pollard-like solutions are investigated in \cite{CM,Io2,Mc} and so on.

Despite these advances in exact solutions modelling water waves, the investigation of similar exact solutions in the more intricate setting of compressible atmospheric flows remains comparatively rudimentary. The reasons lie in the vast differences in these two specific topics \cite{CJ1,CJ2,CJ3}. Very recently, Constantin showed that Gerstner's flow can be used to construct explicit mountain waves solutions propagating upwards in \cite{Co1}. Subsequently, Henry derived Gerstner-like solutions to model the trapped lee waves in the equatorial $f$-plane and vertically propagating mountain waves at general latitudes \cite{H24}.

Inspired by the recent studies \cite{Co1,H24}, the aim of this paper is to show that there exist exact atmospheric gravity wave solutions to the $\beta$-plane approximation. Within the Lagrangian framework, we construct nonlinear Gerstner-like solutions, which are three-dimensional because of the appearance of the Coriolis forces. In addition, qualitative analysis of the three-dimensional flow pattern is also given about the density and pressure, which depend not only on height but also on latitude compared with the ones in \cite{Co1,H24}, and we get that density and pressure decrease with height. In regard to the vorticity, the monotonicity of the second component has been investigated in \cite{Co1}. As the first and third component of the vorticity are both nonzero away from the Equator, we pay attention to the norm of vorticity and show that the magnitude of vorticity increases with height, which agrees with the observation that mountain waves can generate strong vortices at high altitudes.

The remainder of this paper is organized as follows. In Section 2, we present the governing equations with complete Coriolis forces for the mountain waves in the $\beta$-plane approximation. In Section 3, we propose the exact solution to the governing equations in the Lagrangian framework, which model the trapped lee waves. In Section 4, the density, pressure and vorticity are analyzed qualitatively.

\section{The governing equations in the $\beta$-plane approximation}
\large
Concerning mountain waves moving along the surface of Earth, we choose a rotating framework with the origin at a point on the Earth's surface: $(x, y, z)$ are Cartesian coordinates with the zonal coordinate $x$ pointing east, the meridional coordinate $y$ pointing north and the vertical coordinate $z$ pointing up. The Coriolis parameters, depending on the variable latitude $\phi$, are taken to be
\begin{equation*}
f=2\Omega\sin\phi,\quad \hat{f}=2\Omega\cos\phi,
\end{equation*}
with $\Omega=7.29\times10^{-5}\ rad/s$ being the rotational speed of the Earth. Within the $\beta$-plane approximation, we consider that, at the fixed latitude $\phi$, $\hat{f}$ is constant and $f$ has a linear variation in the latitude with the form of $f + \beta y$ for
$\beta=\frac{\hat{f}}{R}=\frac{2\Omega \cos \phi}{R}$, and $R=6378$ km being the equatorial radius. Then the full $\beta$-plane approximation governing equations for the mountain waves comprise the Euler equations \cite{Co1,Ho}
\begin{equation}\label{2.1}
\begin{cases}
\frac{\partial u}{\partial  t}+u\frac{\partial u}{\partial  x}+v\frac{\partial u}{\partial  y}+w\frac{\partial u}{\partial  z}+\hat{f}w-(f+\beta y)v&=-\frac 1 \rho P_x,\\
\frac{\partial v}{\partial  t}+u\frac{\partial v}{\partial  x}+v\frac{\partial v}{\partial  y}+w\frac{\partial v}{\partial  z}+(f+\beta y)u&=-\frac 1 \rho P_y,\\
\frac{\partial w}{\partial  t}+u\frac{\partial w}{\partial  x}+v\frac{\partial w}{\partial  y}+w\frac{\partial w}{\partial  z}-\hat{f}u&=-\frac 1 \rho P_z-g,
\end{cases}
\end{equation}
together with the equation of mass conservation
\begin{equation}\label{2.2}
\frac{\partial \rho}{\partial  t}+u\frac{\partial \rho}{\partial x}+v\frac{\partial \rho}{\partial y}+w\frac{\partial \rho}{\partial z}+\rho\left(\frac{\partial u}{\partial x}+\frac{\partial v}{\partial y}+\frac{\partial w}{\partial z}\right)=0,
\end{equation}
the equation of state for an ideal gas,
\begin{equation}\label{2.3}
P=\rho \mathfrak{R} \mathcal{T},
\end{equation}
and the first law of the thermodynamics
\begin{equation}\label{2.4}
c_p\left(\frac{\partial \mathcal{T}}{\partial t}+u\frac{\partial \mathcal{T}}{\partial x}+v\frac{\partial \mathcal{T}}{\partial y}+w\frac{\partial \mathcal{T}}{\partial z}\right)-\frac{1}{\rho}\left(\frac{\partial P}{\partial t}+u\frac{\partial P}{\partial x}+v\frac{\partial P}{\partial y}+w\frac{\partial P}{\partial z}\right)=0.
\end{equation}
Here $t$ is the time, $\mathbf{U}=(u,v,w)$ is the air velocity, $\rho$ is the air density, $P$ is the atmospheric pressure, $g =9.81\ m/s^2$ is the gravitational acceleration at the Earth's surface, $\mathcal{T}$ is the (absolute) temperature, $\mathfrak{R}\approx 287\ m^2 s^{-2} K^{-1}$ is the gas constant for dry air, $c_p\approx$ 1000 $m^2 s^{-2} K^{-1}$ is the specific heat of dry air at an atmospheric pressure of 1000 $mb$.
\section{Exact solutions for the trapped lee waves}
\large
The purpose of this section is to derive the exact explicit solutions to \eqref{2.1}-\eqref{2.4}. Within the Lagrangian framework, the Eulerian labelling variables $(x,y,z)$ at time $t$ are expressed as functions of Lagrangian labelling variables $(q,s,r)$, which specify moving air particles. Suppose that the positions of a particle at time $t$ is given by
\begin{equation}\label{3.1}
\begin{cases}
x=q-c_0t-\frac 1 k e^{k[r-m(s)]}\sin[k(q-ct)],\\
y=s,\\
z=Z_0+r+\frac 1 k e^{k[r-m(s)]}\cos[k(q-ct)],
\end{cases}
\end{equation}
where $k > 0$ is the wavenumber, $c > 0$ is the wave speed, $c_0\in\mr$  represents a mean background wind and the constant $Z_0$ can be freely chosen as a fixed reference altitude. The expression of the function $m$ depending on $s$ is to be determined below such that \eqref{3.1} defines an exact solution of the governing equations \eqref{2.1}-\eqref{2.4}.

The labelling variable $q$ parametrizes the leeward direction and $r$ captures the height of the layer of laminar flow, with the amplitude of the oscillations of a particle increasing with height. The label domain is given by real values of $(q,s,r)\in (\mr^+,[-s_0,s_0],(r_1,r_0))$ with
\begin{equation} \label{3.22}
r-m(s)<r_0-m(s)<0.
\end{equation}
The constraint $r_0-m(s)< 0$ has to be imposed for the reason of the boundedness of the vorticity of the air obtained in \eqref{4.3}. The flow pattern \eqref{3.1} models an atmospheric flow which is characteristic of trapped lee waves, for which air parcels propagate horizontally with velocity $(c_0,0,0)$ while oscillating vertically about a fixed mean altitude.
\begin{theorem}\label{the3.1}
Given
\begin{equation}\label{3.18}
m(s)=\frac {2fcs+\beta cs^2} {2(\hat{f}c_0+g)}, \quad \hat{f}c_0+g>0
\end{equation}
satisfying $r_0-m(s)< 0$ and a density function
\begin{equation}\label{3.20}
\rho(s,r)=F\left(\frac {e^{2k\left(r-\frac {2fcs+\beta cs^2} {2(\hat{f}c_0+g)}\right)}} {2k}-r+\frac {2fs+\beta s^2} {2(\hat{f}c_0+g)}c_0\right),
\end{equation}
where $F:(0,\infty)\rightarrow (0, \infty)$ is continuously differentiable and monotone increasing, then for a
given arbitrary wavenumber $k > 0$, there is a wave speed
\begin{equation}\label{3.17}
c=\frac{-\hat{f}+\sqrt{\hat{f}^2+4k(\hat{f}c_0+g)}}{2k}.
\end{equation}
and an associated pressure distribution
\begin{equation}\label{3.3}
P(s,r)=(\hat{f}c_0+g)\mathcal{F}\left(\frac{e^{2k\left(r-\frac{2fcs+\beta cs^2}{2(\hat{f}c_0+g)}\right)}}{2k}-r+\frac{2fs+\beta s^2}{2(\hat{f}c_0+g)}c_0\right),
\end{equation}
with $\mathcal{F}'=F$,  such that the velocity field $(u, v, w)$
\begin{equation}\label{3.7}
\begin{cases}
u=\frac {Dx} {Dt}=-c_0+c e^{\xi}\cos\theta,\\
v=\frac {Dy} {Dt}=0,\\
w=\frac {Dz} {Dt}=c e^{\xi}\sin\theta,
\end{cases}
\end{equation}
determined by \eqref{3.1} solves the system \eqref{2.1}-\eqref{2.4}.
\end{theorem}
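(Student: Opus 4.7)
I work entirely in the Lagrangian frame: since $(q,s,r)$ are constant along each trajectory, the material derivative $D/Dt$ coincides with the partial time derivative at fixed $(q,s,r)$. With $\theta=k(q-ct)$ and $\xi=k[r-m(s)]$, differentiating \eqref{3.1} once in $t$ recovers \eqref{3.7}, and a second differentiation yields
\begin{equation*}
\frac{Du}{Dt}=kc^2 e^\xi\sin\theta,\qquad \frac{Dv}{Dt}=0,\qquad \frac{Dw}{Dt}=-kc^2 e^\xi\cos\theta.
\end{equation*}

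To avoid inverting the Lagrangian map, I rephrase the three Euler equations \eqref{2.1} as equations for the Lagrangian partials of $P$ by the chain rule. Denoting the left-hand sides of \eqref{2.1} by $A,B,C$, I need to check
\begin{equation*}
P_\alpha = -\rho\bigl(A\,x_\alpha + B\,y_\alpha + C\,z_\alpha\bigr),\qquad \alpha\in\{q,s,r\},
\end{equation*}
with the derivatives $x_\alpha,y_\alpha,z_\alpha$ read off directly from \eqref{3.1} in terms of $e^\xi,\sin\theta,\cos\theta$ and $m'(s)$. A brief calculation gives $A=(kc^2+\hat{f}c)e^\xi\sin\theta$ and $C=\hat{f}c_0+g-(kc^2+\hat{f}c)e^\xi\cos\theta$, so the dispersion relation \eqref{3.17}---which is precisely the algebraic identity $kc^2+\hat{f}c=\hat{f}c_0+g$---collapses both coefficients to $\hat{f}c_0+g$, forces the $q$-identity to $P_q=0$, and, after using $\sin^2\theta+\cos^2\theta=1$, reduces the $r$-identity to $P_r=\rho(\hat{f}c_0+g)(e^{2\xi}-1)$, matching the $r$-derivative of \eqref{3.3}.

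The $s$-identity is the decisive step. The prescribed form \eqref{3.18} is equivalent to $(f+\beta s)=(\hat{f}c_0+g)m'(s)/c$; substituting this into $B$ and collecting terms, the oscillatory contribution of the Coriolis term cancels the oscillatory pieces of $Ax_s+Cz_s$, leaving $P_s=\rho(\hat{f}c_0+g)m'(s)\bigl[c_0/c-e^{2\xi}\bigr]$, which matches the $s$-derivative of \eqref{3.3}. Compatibility of this expression with the one for $P_r$ above is what pins $\rho$ down as a smooth function of the single combination displayed in \eqref{3.20}; conversely, any such $\rho$ yields a consistent pressure via the antiderivative $\mathcal{F}$.

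For the continuity equation \eqref{2.2}, expanding the $3\times 3$ Jacobian of the Lagrangian map along the $y=s$ row gives $J=(1-e^\xi\cos\theta)(1+e^\xi\cos\theta)-e^{2\xi}\sin^2\theta=1-e^{2\xi}$, depending only on $(s,r)$. Since \eqref{3.20} makes $\rho$ also depend only on $(s,r)$, both $D\rho/Dt=0$ and $DJ/Dt=0$, whence $D(\rho J)/Dt=0$, the Lagrangian form of \eqref{2.2}. The first law \eqref{2.4} then follows at once from $DP/Dt=0$ and the fact that \eqref{2.3} renders $\mathcal{T}=P/(\rho\mathfrak{R})$ Lagrangian-time-independent. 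The main obstacle I foresee is the bookkeeping in the $s$-equation, where the acceleration, Coriolis and gravity contributions must conspire to cancel exactly, and it is this conspiracy that forces the specific profile \eqref{3.18}; once \eqref{3.17} and \eqref{3.18} are in place, the remaining verifications are organised algebraic cancellations.
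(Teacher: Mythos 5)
Your proposal is correct and follows essentially the same route as the paper: transforming the Euler equations into Lagrangian pressure gradients $P_q,P_s,P_r$ via the Jacobian, using the dispersion relation $kc^2+\hat f c=\hat f c_0+g$ and the profile $m(s)$ to kill the oscillatory terms, and verifying \eqref{2.2}--\eqref{2.4} from the time-independence of the Jacobian determinant and of $\rho(s,r)$, $P(s,r)$. The only (equivalent) cosmetic difference is that you package mass conservation as $D(\rho J)/Dt=0$, whereas the paper checks $\nabla\cdot\mathbf{U}=0$ and $D\rho/Dt=0$ separately.
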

\begin{proof}
To prove the theorem, we choose for notational convenience that
\begin{equation}\label{3.4}
\xi=k[r-m(s)],\quad \theta=k(q-ct),
\end{equation}
and accordingly compute the Jacobian matrix of \eqref{3.1}
\begin{equation}\label{3.5}
\begin{pmatrix}
\frac {\partial x}{\partial q}&\frac {\partial y}{\partial q}&\frac {\partial z}{\partial q}\\
\frac {\partial x}{\partial s}&\frac {\partial y}{\partial s}&\frac {\partial z}{\partial s}\\
\frac {\partial x}{\partial r}&\frac {\partial y}{\partial r}&\frac {\partial z}{\partial r}\\
\end{pmatrix}
=
\begin{pmatrix}
1-e^{\xi}\cos\theta&0&-e^{\xi}\sin\theta\\
m_se^{\xi}\sin\theta&1&-m_se^{\xi}\cos\theta\\
-e^{\xi}\sin\theta&0&1+e^{\xi}\cos\theta
\end{pmatrix}.
\end{equation}
The Jacobian determinant $D(s, r) = 1-e^{2\xi}$ of the map relating at the instant $t$ the particle positions to the labelling variables is time-independent, so \eqref{3.1} is area-preserving and thus
\begin{equation}\label{3.6}
\frac{\partial u}{\partial x}+\frac{\partial v}{\partial y}+\frac{\partial w}{\partial z}=0.
\end{equation}

The velocity \eqref{3.7} of a particle can also be calculated directly from \eqref{3.1} and acceleration
follows as
\begin{equation}\label{3.8}
\begin{cases}
\frac {Du} {Dt}=kc^2 e^{\xi}\sin\theta,\\
\frac {Dv} {Dt}=0,\\
\frac {Dw} {Dt}=-kc^2 e^{\xi}\cos\theta.
\end{cases}
\end{equation}
Considering the dependence of a density form on $(x,y,z,t)$-variables of the form
$\rho=\mathfrak{f}(X, Y, Z)$,
with
\begin{equation}\label{3.10}
\begin{cases}
X=x+c_0t-ct=\frac{\theta}{k}-\frac{1}{k}e^{\xi}\sin \theta,\\
Y=y=s,\\
Z=z-Z_0=r+\frac{1}{k}e^{\xi}\cos \theta,
\end{cases}
\end{equation}
then we have
\begin{align}\label{3.11}
\frac{\partial \rho}{\partial \theta}&=\frac{\partial \mathfrak{f}}{\partial X}\frac{\partial X}{\partial \theta}
+\frac{\partial \mathfrak{f}}{\partial Y}\frac{\partial Y}{\partial \theta}
+\frac{\partial \mathfrak{f}}{\partial Z}\frac{\partial Z}{\partial \theta}\nonumber\\
&=\frac{\partial \mathfrak{f}}{\partial X} \frac{1-e^{\xi}\cos \theta}{k}-\frac{\partial \mathfrak{f}}{\partial Z} \frac{e^{\xi}\sin \theta}{k}\nonumber\\
&=\frac{\partial \mathfrak{f}}{\partial X}\left(\frac{1}{k}-\frac{u+c_0}{ck}\right)-\frac{\partial \mathfrak{f}}{\partial Z}\left(\frac{w}{ck}\right)\nonumber\\
&=-\frac{1}{ck}\left[(u+c_0-c)\frac{\partial \mathfrak{f}}{\partial X}+w\frac{\partial \mathfrak{f}}{\partial Z}\right]\nonumber\\
&=-\frac{1}{ck}\left(\frac{\partial \rho}{\partial t}+u\frac{\partial \rho}{\partial x}+w\frac{\partial \rho}{\partial z}\right),
\end{align}
where the expressions of $(u,v,w)$ in \eqref{3.7} are used. Given that $\rho=\rho(s, r)$ in \eqref{3.20}, it is obvious from \eqref{3.11} that
\begin{equation}\label{3.12}
\frac{\partial \rho}{\partial t}+u\frac{\partial \rho}{\partial x}+w\frac{\partial \rho}{\partial z}=0.
\end{equation}
Combining \eqref{3.6}, \eqref{3.12} and \eqref{3.7}, the equation \eqref{2.2} holds.

Now we pay attention to the equations \eqref{2.1}, which, due to \eqref{3.7} and \eqref{3.8}, can be written as
\begin{equation}\label{3.13}
\begin{cases}
P_x=-\rho(kc^2+\hat{f}c)e^{\xi}\sin\theta,\\
P_y=-\rho\left((f+\beta s)ce^{\xi}\cos\theta-(f+\beta s)c_0\right),\\
P_z=-\rho\left(-(kc^2+\hat{f}c) e^{\xi}\cos\theta+\hat{f}c_0+g\right).
\end{cases}
\end{equation}
The change of variables
\begin{equation}\label{3.14}
\begin{pmatrix}
P_q\\P_s\\P_r
\end{pmatrix}
=\begin{pmatrix}
\frac {\partial x}{\partial q}&\frac {\partial y}{\partial q}&\frac {\partial z}{\partial q}\\
\frac {\partial x}{\partial s}&\frac {\partial y}{\partial s}&\frac {\partial z}{\partial s}\\
\frac {\partial x}{\partial r}&\frac {\partial y}{\partial r}&\frac {\partial z}{\partial r}\\
\end{pmatrix}
\begin{pmatrix}
P_x\\P_y\\P_z
\end{pmatrix},\\
\end{equation}
transforms \eqref{3.13} into
\begin{equation}\label{3.15}
\begin{cases}
P_q=-\rho(kc^2 +\hat{f}c-\hat{f}c_0-g)e^{\xi}\sin\theta,\\
P_s=-\rho(kc^2+\hat{f}c)m_se^{2\xi}-\rho\left((f+\beta s)c-(\hat{f}c_0+g)m_s\right)e^{\xi}\cos\theta\\
\quad\quad\:+\rho(f+\beta s)c_0,\\
P_r=\rho(kc^2+\hat{f}c)e^{2\xi}+\rho\left(kc^2 +\hat{f}c-\hat{f}c_0-g\right)e^{\xi}\cos\theta-\rho(\hat{f}c_0+g).
\end{cases}
\end{equation}
We prescribe now a suitable pressure function such that \eqref{3.15} hold, proving thus that \eqref{3.7} is indeed an exact solution of the Euler equation \eqref{2.1}. In the case that
\begin{equation}\label{3.16}
kc^2 +\hat{f}c-\hat{f}c_0-g=0,
\end{equation}
and
\[
m(s)=\frac {2fcs+\beta cs^2} {2(\hat{f}c_0+g)},
\]
where $c_0>-\frac g {\hat{f}}$ is valid on physical grounds (otherwise $c_0\leq -\frac g {2\Omega}\approx-6.7\times 10^{4} m/s$), then the equations \eqref{3.15} are reduced to
\begin{equation}\label{3.21}
\begin{cases}
P_q=0,\\
P_s=-\rho(kc^2+\hat{f}c)m_se^{2\xi}+\rho(f+\beta s)c_0,\\
P_r=\rho(kc^2+\hat{f}c)e^{2\xi}-\rho(\hat{f}c_0+g),
\end{cases}
\end{equation}
which satisfy the requirements $P_{rs}=P_{sr}$, $P_{qs}=P_{sq}$ and $P_{qr}=P_{rq}$. From \eqref{3.16} we get the
for $c>0$ the dispersion relation \eqref{3.17}. Moreover, given $\rho$ as in the form of \eqref{3.20}, we obtain the expression of the pressure as \eqref{3.3}.

On the other hand, with $\rho=\rho(s,r)$ and $P=P(s,r)$, the equation \eqref{2.3} ensures that $\mathcal{T}=\mathcal{T}(s,r)$, so that the temperature of an air parcel does not change during its motion. Therefore, \eqref{2.4} holds. This completes the proof.
\end{proof}
\begin{remark}
The solution obtained in Theorem \ref{the3.1} has no mean vertical velocity component, and thus it can not
prescribe upward propagating mountain waves in the $\beta$-plane approximation.
\end{remark}

\section{Qualitative Analysis}
\large
We compute the inverse of the Jacobian matrix \eqref{3.5} as
\begin{equation}\label{4.1}
\begin{pmatrix}
\frac {\partial q}{\partial x}&\frac {\partial s}{\partial x}&\frac {\partial r}{\partial x}\\
\frac {\partial q}{\partial y}&\frac {\partial s}{\partial y}&\frac {\partial r}{\partial y}\\
\frac {\partial q}{\partial z}&\frac {\partial s}{\partial z}&\frac {\partial r}{\partial z}
\end{pmatrix}
=\frac 1 {1-e^{2\xi}}
\begin{pmatrix}
1+e^{\xi}\cos\theta&0&e^{\xi}\sin\theta\\
-m_se^{\xi}\sin\theta&1-e^{2\xi}&m_s(e^{\xi}\cos\theta-e^{2\xi})\\
e^{\xi}\sin\theta&0&1-e^{\xi}\cos\theta
\end{pmatrix}.
\end{equation}
\subsection{Temperature, density and pressure}
Since $F$ referred to in Theorem \ref{the3.1} is monotone increasing, then we have by \eqref{3.22} that
\begin{equation*}
\frac{\partial \rho}{\partial r}=F^{'}\left(\frac {e^{2\xi}} {2k}-r+\frac {2fs+\beta s^2} {2(\hat{f}c_0+g)}c_0\right)
\left(e^{2\xi}-1\right)<0,
\end{equation*}
that is the density function $\rho=\rho(s,r)$ decreases with the $r$. Furthermore,  \eqref{4.1} and \eqref{3.1} reveal that
\begin{equation*}
\frac{\partial \rho}{\partial z}=\frac{\partial \rho}{\partial r}\frac{\partial r}{\partial z}
+\frac{\partial \rho}{\partial s}\frac{\partial s}{\partial z}
=\frac{\partial \rho}{\partial r}\cdot \frac{1-e^\xi \cos \theta}{1-e^{2\xi}} < 0,
\end{equation*}
which shows that the density decreases with height. Similarly, we have
\begin{equation*}
\frac{\partial P}{\partial z}=\frac{\partial P}{\partial r}\frac{\partial r}{\partial z}
+\frac{\partial P}{\partial s}\frac{\partial s}{\partial z}
=-\rho(\hat{f}c_0+g)(1-e^{\xi}\cos\theta)< 0,
\end{equation*}
so that the pressure decreases with altitude.
\subsection{The vorticity}
\large
According to \eqref{4.1}, we can get the velocity gradient tension
\begin{equation}\label{4.2}
\begin{aligned}
\nabla \mathbf{U}&=
\begin{pmatrix}
\frac {\partial u}{\partial x}&\frac {\partial u}{\partial y}&\frac {\partial u}{\partial z}\\
\frac {\partial v}{\partial x}&\frac {\partial v}{\partial y}&\frac {\partial v}{\partial z}\\
\frac {\partial w}{\partial x}&\frac {\partial w}{\partial y}&\frac {\partial w}{\partial z}
\end{pmatrix}
=\begin{pmatrix}
\frac {\partial u}{\partial q}&\frac {\partial u}{\partial s}&\frac {\partial u}{\partial r}\\
\frac {\partial v}{\partial q}&\frac {\partial v}{\partial s}&\frac {\partial v}{\partial r}\\
\frac {\partial w}{\partial q}&\frac {\partial w}{\partial s}&\frac {\partial w}{\partial r}
\end{pmatrix}
\begin{pmatrix}
\frac {\partial q}{\partial x}&\frac {\partial q}{\partial y}&\frac {\partial q}{\partial z}\\
\frac {\partial s}{\partial x}&\frac {\partial s}{\partial y}&\frac {\partial s}{\partial z}\\
\frac {\partial r}{\partial x}&\frac {\partial r}{\partial y}&\frac {\partial r}{\partial z}
\end{pmatrix}\\
&=\frac {cke^{\xi}} {1-e^{2\xi}}
\begin{pmatrix}
-\sin\theta&m_s(e^{\xi}-\cos\theta)&-e^{\xi}+\cos\theta\\
0&0&0\\
\cos\theta+e^{\xi}&-m_s\sin\theta&\sin\theta
\end{pmatrix}.
\end{aligned}
\end{equation}
The vorticity $\mathbf{\gamma}=(w_y-v_z, u_z-w_x,v_x-u_y):=(\gamma_1, \gamma_2, \gamma_3)$ is obtained as
\begin{align}\label{4.3}
(\gamma_1, \gamma_2, \gamma_3)=\left(-\frac {k(f+\beta s)c^2} {\hat{f}c_0+g} \frac {e^{\xi}\sin\theta} {1-e^{2\xi}},
-\frac {2kce^{2\xi}} {1-e^{2\xi}}, \frac {k(f+\beta s)c^2} {\hat{f}c_0+g} \frac{e^{\xi}\cos\theta-e^{2\xi}} {1-e^{2\xi}}\right).
\end{align}
Note that the $\gamma_2$, accounting for the local spin in the $y$-direction, is identical to the vorticity of a Gerstner wave. While the first and third component are both nonzero away from the Equator. The monotonicity of
$\gamma_2$ on the depth $z$ has been investigated in \cite{Co1}, and monotonicity of the magnitude of vorticity $\gamma$ is also of interest. As
\begin{equation}\label{4.4}
\frac{\partial |\gamma|}{\partial z}=\frac{\partial (\gamma_1^2+\gamma_2^2+\gamma_3^2)^{\frac{1}{2}}}{\partial z}
=\frac{1}{|\gamma|}\left(\gamma_1 \frac{\partial \gamma_1}{\partial z}+\gamma_2 \frac{\partial \gamma_2}{\partial z}+\gamma_3 \frac{\partial \gamma_3}{\partial z}\right),
\end{equation}
and a tedious calculation shows that
\begin{align}\label{4.5}
\gamma_1 \frac{\partial \gamma_1}{\partial z}+\gamma_2 \frac{\partial \gamma_2}{\partial z}+\gamma_3 \frac{\partial \gamma_3}{\partial z}=&
\frac{k^2c^4(f+\beta s)^2}{(\hat{f}c_0+g)^2} \cdot \frac{1-e^{\xi}\cos \theta}{(1-e^{2\xi})^4}
ke^{2\xi}\nonumber\\
&\times \left[1-3e^{\xi}\cos\theta+3e^{2\xi}-e^{3\xi}\cos\theta+\frac{8}{m_s^2}e^{2\xi}\right],
\end{align}
where \eqref{3.18}, \eqref{4.1} and \eqref{4.3} are used. Denoting
$A=3+\frac{8}{m_s^2},$ and defining the function
\begin{equation}\label{4.7}
\Psi(e^{\xi},\cos\theta)=1-3e^{\xi}\cos\theta+Ae^{2\xi}-e^{3\xi}\cos\theta,
\end{equation}
then for $\frac{k^2c^4(f+\beta s)^2}{(\hat{f}c_0+g)^2}>0$ and $\frac{1-e^{\xi}\cos \theta}{(1-e^{2\xi})^4}>0$, the key to the monotonicity of $|\gamma|$ on $z$ is to determine the sign of the function $\Psi(e^{\xi},\cos\theta)$. Note
\begin{equation}\label{4.8}
\Psi(e^{\xi},\cos\theta) \geq \Psi(e^{\xi})=1+e^\xi \left(Ae^\xi-e^{2\xi}-3\right).
\end{equation}
Moreover, from the arguments in \cite{Ho}, we know that magnitude of the Coriolis parameter $f$ and $\hat{f}$ is about $10^{-4}s^{-1}$, "beta" parameter $\beta$ is about $10^{-11}m^{-1}s^{-1}$, the speed of wave $c$ and mean background wind $c_0$ is typical of $3\times10m/s$, then if we take typically the latitude $s$ is of $10^4m$, we have $m_s=\frac {\left(f+\beta s\right)c} {\hat{f}c_0+g}\sim 3\times10^{-4}$ and thus $A=3+\frac{8}{m_s^2}\sim 8.89\times 10^{7}$. Furthermore, the graphs of $\Psi^{'}(e^{\xi})$ and $\Psi(e^{\xi})$ are sketched out in Figure 1.
\begin{figure}[htbp]
\centering
\subfigure[]
{\begin{minipage}[t]{0.48\textwidth}
\centering
\includegraphics[width=2.7in]{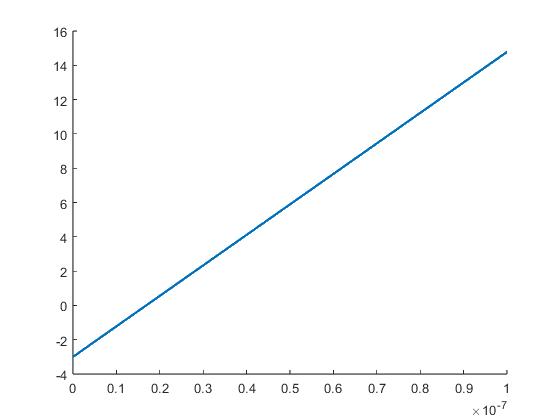}
\end{minipage}
}
\subfigure[]
{\begin{minipage}[t]{0.48\textwidth}
\centering
\includegraphics[width=2.7in]{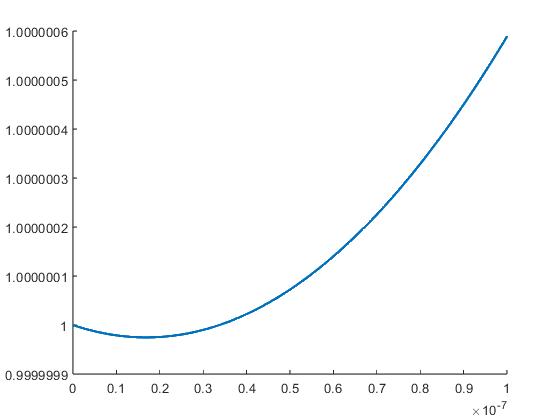}
\end{minipage}
}
\caption{(a) Typical graph of the map $\Psi'(X)=-3X^2+2AX+3$ with $X=e^\xi$ for $A=3+\frac{8}{m_s^2}$ and $m_s=3\times10^{-4}$. The zero of the map is $X_1=\frac{A-\sqrt{A^2-9}}{3}\sim 1.5\times 10^{-8}$.
(b) Typical graph of the map $\Psi (X)=1+X(AX-X^2-3)$ with $X=e^\xi$ for $A=3+\frac{8}{m_s^2}$ and $m_s=3\times10^{-4}$, where $\Psi (X_1)_{\min}=\frac{2A^3+(18-2A^2)\sqrt{A^2-9}-27A}{27}+1>0$.}
\end{figure}

Figure 1 shows that $\Psi(e^{\xi})>0$ and thus $\frac{\partial |\gamma|}{\partial z}>0$. That is to say, the magnitude of three-dimensional vorticity $\gamma$ increases with height. This indicates that lee waves can generate strong vortices at high altitudes, which agrees with the observation.

\vspace{0.5cm}
\noindent {\bf Acknowledgements.}
The work of Fan is supported by a NSF of Henan Province of China Grant No. 222300420478.

\end{document}